\newcommand{\cC}{{\mathcal C}}
\newcommand{\cF}{{\mathcal F}}
\newcommand{\bet}{\beta}
\newcommand{\eps}{\varepsilon}
\newcommand{\lam}{\lambda}
\renewcommand{\phi}{\varphi}
\newcommand{\Del}{\Delta}
\newcommand{\longc}{,\dotsc,}
\newcommand{\longp}{+\dotsb+}
\newcommand{\seq}{\subseteq}
\newcommand{\stm}{\setminus}
\newcommand{\est}{\varnothing}
\renewcommand{\)}{\right)}
\renewcommand{\(}{\left(}
\newcommand{\prt}{\partial}
\newtheorem{claim}{Claim}
\newtheorem{lemma}{Lemma}
\newtheorem{theorem}{Theorem}
\newtheorem{corollary}{Corollary}
\newcommand{\refl}[1]{\ref{l:#1}}
\newcommand{\refm}[1]{\ref{m:#1}}
\newcommand{\reft}[1]{\ref{t:#1}}
\newcommand{\refc}[1]{\ref{c:#1}}
\newcommand{\refs}[1]{\ref{s:#1}}
\newcommand{\refe}[1]{\eqref{e:#1}}
\newcommand{\refb}[1]{\cite{b:#1}}
\title[Approximate convexity and an edge-isoperimetric estimate]%
  {Approximate convexity\\ and an edge-isoperimetric estimate}
\author{Vsevolod F. Lev}
\email{seva@math.haifa.ac.il}
\address{Department of Mathematics, The University of Haifa at Oranim,
  Tivon 36006, Israel}
\subjclass[2010]%
  {Primary: 39B62;    
   secondary: 26A51,  
              05C35,  
              05D99.} 
\keywords{Approximate convexity, edge-isoperimetric inequalities.}
\begin{document}
\baselineskip 16pt

\maketitle

\begin{abstract}
We study extremal properties of the function
  $$ F(x) := \min\{k\|x\|^{1-1/k}\colon k\ge 1\},\ x\in[0,1], $$
where $\|x\|=\min\{x,1-x\}$. In particular, we show that $F$ is the pointwise
largest function of the class of all real-valued functions $f$ defined on the
interval $[0,1]$, and satisfying the relaxed convexity condition
  $$ f(\lam x_1+(1-\lam)x_2) \le \lam f(x_1)+(1-\lam)f(x_2)+|x_2-x_1|,
                                                 \ x_1,x_2,\lam\in[0,1] $$
and the boundary condition $\max\{f(0),f(1)\}\le 0$.

As an application, we prove that if $A$ and $S$ are subsets of a finite
abelian group $G$, such that $S$ is generating and all of its elements have
order at most $m$, then the number of edges from $A$ to its complement $G\stm
A$ in the directed Cayley graph induced by $S$ on $G$ is
  $$ \prt_S(A) \ge \frac1m\,|G|\, F(|A|/|G|). $$
\end{abstract}

\section{Summary of results}

In this section we discuss our principal results; the proofs are
presented in Sections~\refs{proof-of-T1}--\refs{more-proofs}.

The central character of this paper is the function
  $$ F(x) := \min \{ k\|x\|^{1-1/k}\colon k\ge 1\}, \ x\in[0,1], $$
where $k$ runs over positive integers, and $\|x\|$ denotes the distance from
$x$ to the nearest integer; that is, $\|x\|=\min\{x,1-x\}$ for $x\in[0,1]$.
More explicitly, letting $\bet_0=1/2$ and $\bet_k:=(1+1/k)^{-k(k+1)}$ for
integer $k\ge 1$ (so that $1/4=\bet_1>\bet_2>\dotsb$), we have
$F(x)=kx^{1-1/k}$ whenever $\bet_k\le x\le\bet_{k-1}$, and $F(1-x)=F(x)$. The
graphs of the function $F$ and the functions $kx^{1-1/k}$ for $k\in\{1,2,3\}$
are presented in Figure~\ref{f:fig}.

\begin{figure}[h]\label{f:fig}
\centering
\includegraphics[trim=1.5mm 1.5mm 1.5mm 1.5mm,clip,
  height=0.40\textwidth,width=0.43\textwidth]{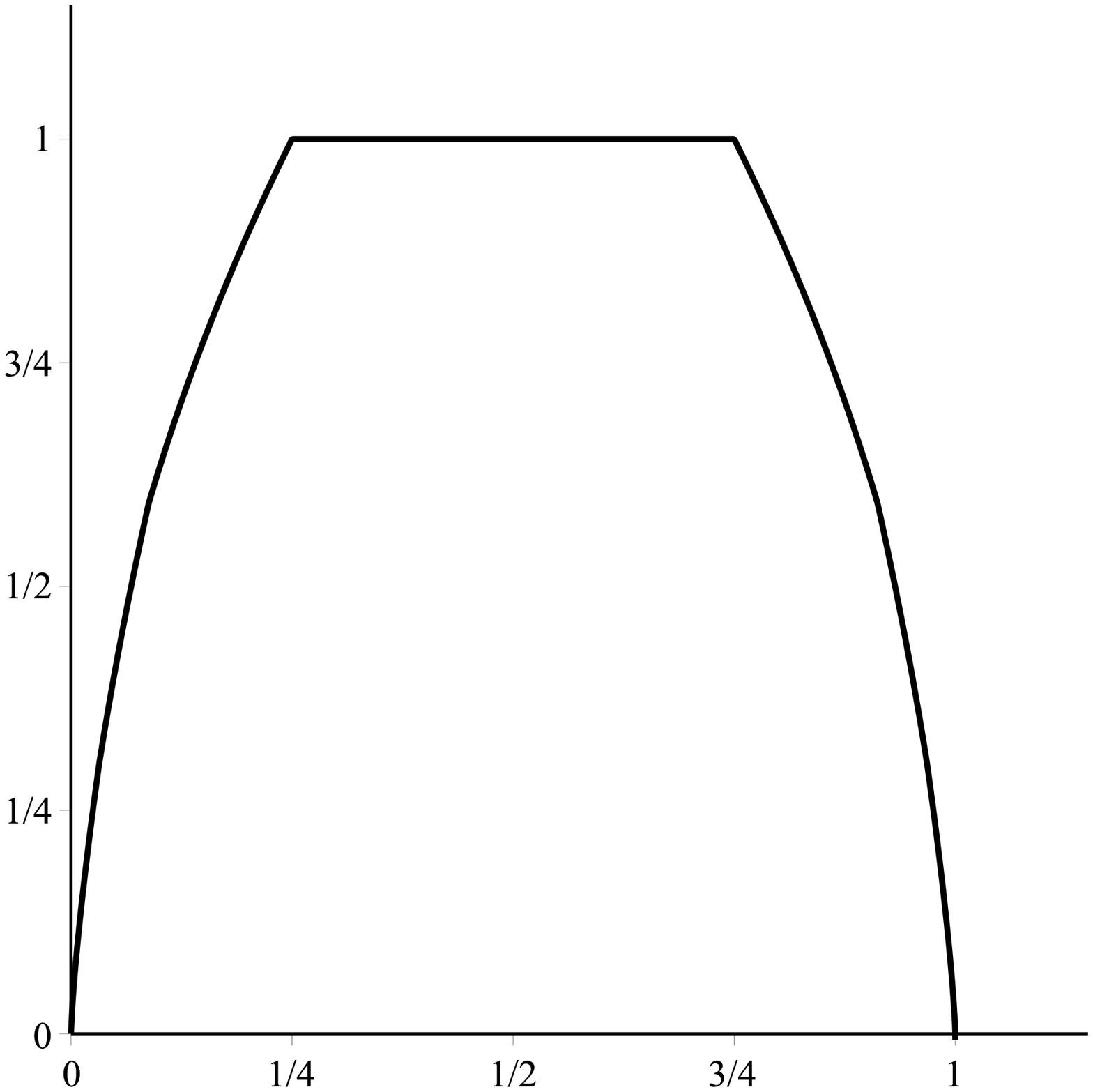}
\hskip 0.08\textwidth
\includegraphics[trim=1.5mm 1.5mm 1.5mm 1.5mm,clip,
  height=0.40\textwidth,width=0.40\textwidth]{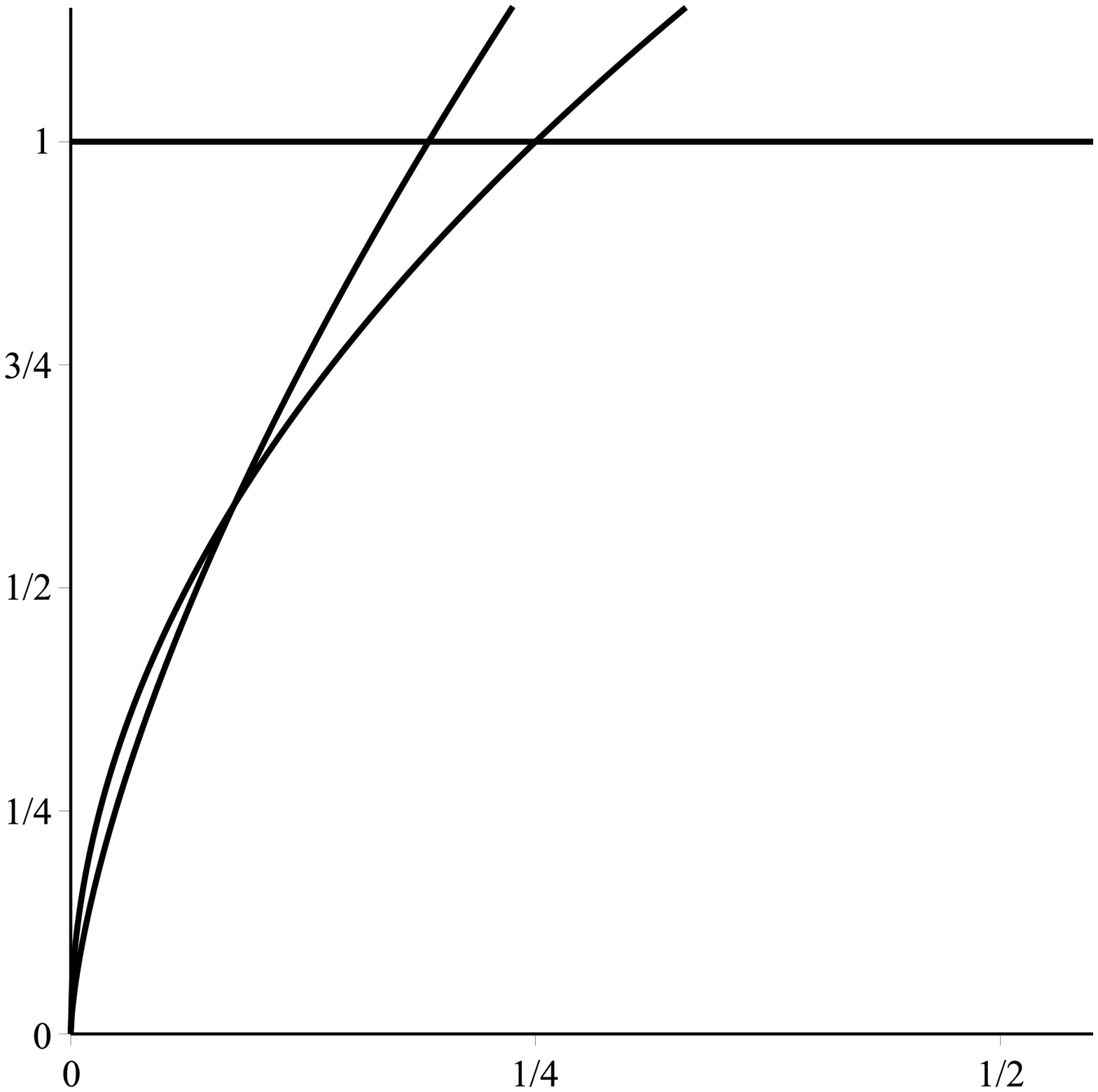}
\caption{The graphs of the functions $F$ and $kx^{1-1/k}$ for
  $k\in\{1,2,3\}$.}
\end{figure}

Recall that for $c,p>0$, a real-valued function $f$ defined on a convex
subset of a (real) normed vector space is called $(c,p)$-convex if it
satisfies
  $$ f(\lam x_1+(1-\lam)x_2) \le \lam f(x_1)+(1-\lam)f(x_2)+ c\|x_2-x_1\|^p $$
for all $x_1$ and $x_2$ in the domain of $f$, and all $\lam\in[0,1]$. We
denote by $\cF$ the class of all $(1,1)$-convex functions on the interval
$[0,1]$; that is, $\cF$ consists of all real-valued functions $f$, defined on
$[0,1]$ and satisfying
\begin{equation}\label{e:cF-def}
  f(\lam x_1+(1-\lam)x_2) \le \lam f(x_1)+(1-\lam) f(x_2)+|x_2-x_1|,
     \ x_1,x_2,\lam\in[0,1].
\end{equation}
Also, let $\cF_0$ be the subclass of all those functions $f\in\cF$ satisfying
the boundary condition
\begin{equation}\label{e:bound-cond}
  \max\{f(0),f(1)\} \le 0.
\end{equation}
Since $\cF$ is closed under translates by a linear function, any function
$f\in\cF$ can be forced into $\cF_0$ just by adding to it an appropriate
linear summand. Hence, studying these two classes is essentially equivalent.

It is immediate from the definition that the classes $\cF$ and $\cF_0$ are
``symmetric around $x=1/2$'' in the sense that a function $f$ belongs to the
class $\cF$ ($\cF_0$) if and only if so does the function $x\mapsto f(1-x)\
(x\in[0,1])$.

Our first principal result shows that the above-defined function $F$ lies in
the class $\cF_0$ and indeed, is the (pointwise) largest function of this
class.
\begin{theorem}\label{t:max-F}
We have $F\in\cF_0$ and $f\le F$ for every function $f\in\cF_0$.
\end{theorem}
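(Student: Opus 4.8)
The plan is to prove the two assertions separately: first the maximality $f \le F$ for all $f\in\cF_0$, which is self-contained, and then the membership $F\in\cF_0$, whose relaxed-convexity part is the real obstacle.

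For maximality I would prove by induction on $k\ge 1$ the statement $P(k)$: every $f\in\cF_0$ satisfies $f(x)\le k x^{1-1/k}$ for all $x\in[0,1]$. The base case $P(1)$ reads $f(x)\le 1$ and is immediate: applying \eqref{e:cF-def} with $(x_1,x_2,\lam)=(1,0,x)$ and using \eqref{e:bound-cond} gives $f(x)\le x f(1)+(1-x)f(0)+1\le 1$. For the inductive step, fix $x\in(0,1]$ and apply \eqref{e:cF-def} to the pair $x_1=0$, $x_2=a$ with $a:=x^{(k-1)/k}\in[x,1]$ and the weight making the combination equal $x$ (namely $1-\lam=x/a$); since $f(0)\le 0$ and $\lam\ge 0$, the term $\lam f(0)$ may be dropped, yielding $f(x)\le (x/a)f(a)+a$. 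Inserting the inductive bound $f(a)\le (k-1)a^{1-1/(k-1)}$ and noting that $a=x^{(k-1)/k}$ is exactly the minimizer in $a$ of the resulting expression, the two contributions collapse to $kx^{1-1/k}$, proving $P(k)$. Taking the minimum over $k$ gives $f(x)\le F(x)$ for $x\le 1/2$; the range $x>1/2$ follows by applying $P(k)$ to the reflected function $t\mapsto f(1-t)$, which lies in $\cF_0$ by the symmetry noted before the theorem.

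For membership, the boundary condition is trivial since $F(0)=F(1)=0$, and $F$ is symmetric by construction. I would first record that $F$ is concave on $[0,1]$: on $[0,1/2]$ it is the pointwise infimum of the concave increasing functions $\phi_k(x):=k x^{1-1/k}$, hence concave there; by symmetry it is concave on $[1/2,1]$; and since $F\equiv 1$ on the whole middle interval $[\bet_1,1-\bet_1]=[1/4,3/4]$ (the index $k=1$ being optimal there), the two halves glue without a downward kink, so $F$ is concave on all of $[0,1]$. Writing $x=\lam x_1+(1-\lam)x_2$ with $x_1<x_2$, the inequality \eqref{e:cF-def} for $F$ is exactly the statement that the concavity gap does not exceed the base length, i.e.
\[
  F(x)-\(\frac{x_2-x}{x_2-x_1}\,F(x_1)+\frac{x-x_1}{x_2-x_1}\,F(x_2)\)\le x_2-x_1,\qquad x\in[x_1,x_2].
\]

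The crux, and the step I expect to be hardest, is this gap estimate. Since $F$ is concave, the gap is a concave function of $x$ vanishing at the endpoints, so its maximum is attained at the tangency point $x^{*}$ where a supporting line of $F$ is parallel to the chord; the problem thus reduces to a one-parameter optimization once one fixes which of the power pieces contain $x_1$ and $x_2$. The naive estimate via the endpoint derivatives, $\mathrm{gap}\le \lam(1-\lam)(x_2-x_1)\bigl(F'(x_1^{+})-F'(x_2^{-})\bigr)$, is too lossy near the endpoints of $[0,1]$, where $F'$ blows up; the point is that $F$, being the lower envelope rather than any single $\phi_k$, grows only like $x\log(1/x)$ near $0$ (each individual $\phi_k$ with $k\ge 2$ is in fact too steep to lie in $\cF$), and this gentle growth keeps the gap under control. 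I therefore expect the decisive work to be a careful case analysis of chords spanning several of the pieces $[\bet_k,\bet_{k-1}]$, in which the precise values $\bet_k=(1+1/k)^{-k(k+1)}$ — chosen exactly so that consecutive pieces meet — guarantee that no chord exceeds the bound, with equality approached only for chords straddling the flat top $[1/4,3/4]$.
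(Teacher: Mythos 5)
Your first half is correct and coincides with the paper's argument: your induction $P(k)$, applied with $x_1=0$, $x_2=a=x^{(k-1)/k}$ and $1-\lam=x/a$, is exactly the paper's iteration of $f(\lam^k)\le\lam f(\lam^{k-1})+\lam^{k-1}$ followed by the substitution $\lam=x^{1/k}$, and your handling of $x>1/2$ by reflecting $f$ matches the paper's appeal to symmetry. Your preparatory observations for the second half are also sound: $F(0)=F(1)=0$, and $F$ is indeed concave on $[0,1]$ (pointwise minimum of the concave functions $kx^{1-1/k}$ on $[0,1/2]$, symmetric, and identically $1$ on $[1/4,3/4]$, so the two halves glue concavely); the paper needs this same fact in order to apply its Lemma~\refl{redend} to $F$.

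But the proof of the theorem's first assertion, $F\in\cF_0$, is missing. Your ``gap estimate'' is not a reduction: with $x=\lam x_1+(1-\lam)x_2$, the displayed inequality is \refe{cF-def} verbatim, merely rewritten in chord weights, and your final paragraph then explicitly defers the verification (``I expect the decisive work to be a careful case analysis\dots'') without performing any of it --- no chord configuration is analyzed and no inequality is actually checked. This is precisely the delicate half of the theorem (the paper itself remarks that proving $F\in\cF_0$ ``is more delicate''), and it does not follow from concavity and the choice of the $\bet_k$ by soft arguments. The paper fills this hole in two genuinely nontrivial steps with no counterpart in your proposal: (i) Lemma~\refl{redend}, proved via the auxiliary function $g(x)=f(x)-kx$, the convexity/concavity of inverses of strictly concave functions (Claim~\refm{conv-inv}), and the perturbation $f_\eps(x)=(f(x)-\eps x^2)/(1+\eps)$ to remove the strictness assumption, which reduces \refe{cF-def} for a concave continuous $f$ to the endpoint case $x_2\in\{0,1\}$; and (ii) the verification of the endpoint inequality $F(\lam x)\le\lam F(x)+x$, which for $0<x\le 1/2$ reduces (choosing $k$ with $F(x)=kx^{1-1/k}$ and bounding $F(\lam x)\le(k+1)(\lam x)^{1-1/(k+1)}$) to the factorization $kt^{k+1}-(k+1)t^k+1=(t-1)^2(kt^{k-1}+(k-1)t^{k-2}\longp 2t+1)\ge 0$, while the range $1/2\le x<1$ needs a separate contradiction argument exploiting that $F(z)/z$ is decreasing. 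Your (correct) remark that each individual $\phi_k$ with $k\ge 2$ fails to lie in $\cF$ shows the envelope structure is essential, but it is a warning sign, not a substitute for steps (i) and (ii); as it stands, the proposal establishes only the maximality assertion $f\le F$.
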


We remark that substituting $x_1=1$ and $x_2=0$ into \refe{cF-def} shows that
all functions from the class $\cF_0$ are uniformly bounded from above, whence
the function $\sup\{f\colon f\in\cF_0\}$ is well defined. It is not difficult
to see that this function itself belongs to $\cF_0$ and is pointwise bounded
from above by the function $F$. However, proving that $F\in\cF_0$ is more
delicate.

For integer $m\ge 2$, let $\cF_m$ denote the class of all real-valued
functions, defined on the interval $[0,1]$ and satisfying the boundary
condition \refe{bound-cond} and the estimate
\begin{equation}\label{e:m}
  f\( \frac{x_1\longp x_m}m \) \le \frac{f(x_1)\longp f(x_m)}{m} + (x_m-x_1),
\end{equation}
for all $x_1\longc x_m\in[0,1]$ with $\min_i x_i=x_1$ and $\max_i x_i=x_m$.
These classes were introduced in \refb{l}, except that the functions from the
class $\cF_2$ (up to the normalization \refe{bound-cond}) are known as
$(1,1)$-\emph{midconvex} and under this name have been studied in a number of
papers; see, for instance, \refb{ttb}. As shown in \cite[Lemma~3]{b:l}, every
concave function from the class $\cF_0$ belongs to all classes $\cF_m$. Thus,
from Theorem~\reft{max-F} we get
\begin{corollary}\label{c:FinFm}
We have $F\in\cF_m$ for all $m\ge 2$.
\end{corollary}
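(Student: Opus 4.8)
The plan is to deduce the corollary from Theorem~\reft{max-F} together with the cited result \cite[Lemma~3]{b:l}, which asserts that every concave function lying in $\cF_0$ belongs to each of the classes $\cF_m$. Since Theorem~\reft{max-F} already provides $F\in\cF_0$, the only point that needs verification is that $F$ is concave on $[0,1]$.

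To establish concavity, I would first work on the half-interval $[0,1/2]$, where $\|x\|=x$ and hence
  $$ F(x)=\min\{kx^{1-1/k}\colon k\ge 1\}. $$
For each integer $k\ge 1$ the power function $x\mapsto kx^{1-1/k}$ is concave on $[0,1/2]$, its exponent $1-1/k$ lying in $[0,1)$ (for $k=1$ it is simply the constant $1$). Since the pointwise infimum of an arbitrary family of concave functions is again concave, $F$ is concave on $[0,1/2]$; and as each of these power functions is nondecreasing there, so is $F$.

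Next, the symmetry $F(1-x)=F(x)$ transports both properties to $[1/2,1]$, giving concavity and monotone nonincrease of $F$ on that half. It remains to glue the two halves, and this is the one point deserving care: for a function concave on $[0,1/2]$ and on $[1/2,1]$ and continuous at $1/2$, global concavity follows once the left derivative at $1/2$ is at least the right derivative. Here the left derivative is $\ge 0$ (as $F$ is nondecreasing on $[0,1/2]$), while by symmetry the right derivative equals its negative and is $\le 0$. In fact the explicit description shows $F\equiv 1$ on the sub-interval $[\bet_1,1-\bet_1]=[1/4,3/4]$, so both one-sided derivatives at $1/2$ vanish and no convex corner can occur. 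Hence $F$ is concave on all of $[0,1]$.

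Having shown $F\in\cF_0$ and $F$ concave, an appeal to \cite[Lemma~3]{b:l} yields $F\in\cF_m$ for every integer $m\ge 2$, as claimed. The only genuine obstacle in this argument is the global concavity of $F$, and even that reduces—via the infimum-of-concave-functions principle and the symmetry of $F$—to the elementary observation that $F$ is flat in a neighborhood of $1/2$.
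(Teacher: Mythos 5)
Your proposal is correct and follows exactly the paper's route: the paper likewise deduces the corollary from Theorem~\reft{max-F} together with \cite[Lemma~3]{b:l}, treating the concavity of $F$ as evident from its explicit piecewise description. Your careful verification of concavity (pointwise minimum of the concave functions $kx^{1-1/k}$ on $[0,1/2]$, symmetry, and gluing at $x=1/2$ via the flatness of $F$ on $[1/4,3/4]$) simply fills in the detail the paper leaves implicit, and is sound.
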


As an application, we establish a result from the seemingly unrelated realm
of edge isoperimetry.

The edge-isoperimetric problem for a graph on the vertex set $V$ is to find,
for every non-negative integer $n\le |V|$, the smallest possible number of
edges between an $n$-element set of vertices and its complement in $V$. We
refer the reader to the survey of Bezrukov \refb{be} and the monograph of
Harper \refb{h} for the history, general perspective, numerous results,
variations, and further references on this and related problems.

In the present paper we are concerned with the situation where the graph
under consideration is a Cayley graph on a finite abelian group. We use the
following notation. Given two subsets $A,S\seq G$ of a finite abelian group
$G$, we consider the directed Cayley graph, induced by $S$ on $G$, and we
write $\prt_S(A)$ for the edge-boundary of $A$; that is, $\prt_S(A)$ is
number of edges in this graph from an element of $A$ to an element in its
complement $G\stm A$:
  $$ \prt_S(A) := |\{ (a,s)\in A\times S\colon a+s\notin A \}|. $$
Equivalently, $\prt_S(A)$ is the number of edges between $A$ and $G\stm A$ in
the \emph{undirected} Cayley graph induced on $G$ by the set $S\cup(-S)$.

As a consequence of Corollary~\refc{FinFm} (thus, ultimately, of
Theorem~\reft{max-F}), we prove
\begin{theorem}\label{t:isoper}
Let $A$ and $S$ be subsets of a finite abelian group $G$, of which $S$ is
generating. If $m$ is a positive integer such that the order of every element
of $S$ does not exceed $m$, then
  $$ \prt_S(A) \ge \frac 1m\,|G| F(|A|/|G|). $$
\end{theorem}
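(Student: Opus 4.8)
The plan is to run an induction over a chain of subgroups exhausting $G$, peeling off one generator at each step and invoking the membership $F\in\cF_q$ supplied by Corollary~\refc{FinFm}. First I would replace $S$ by a convenient generating subset: since $S$ generates $G$, I can greedily extract elements $s_1\longc s_k\in S$ so that, setting $H_i:=\langle s_1\longc s_i\rangle$, one has $\{0\}=H_0\subsetneq H_1\subsetneq\dotsb\subsetneq H_k=G$ with $s_i\notin H_{i-1}$. Because $\prt_S(A)=\sum_{s\in S}\prt_{\{s\}}(A)$ with every summand nonnegative, it suffices to prove the estimate for $\{s_1\longc s_k\}$. Concretely, I will show by induction on $i$ that every $B\seq H_i$ satisfies $\prt_{S_i}(B)\ge\frac1m|H_i|\,F(|B|/|H_i|)$, where $S_i=\{s_1\longc s_i\}$ and the boundary is taken in the Cayley graph on $H_i$; the case $i=k$ is the theorem, and the case $i=0$ is trivial since $F(0)=F(1)=0$.

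For the inductive step write $q:=[H_i:H_{i-1}]$, the order of the image of $s_i$ in $H_i/H_{i-1}$, so that $2\le q\le\operatorname{ord}(s_i)\le m$. I partition $H_i$ into the $q$ cosets $D_l:=ls_i+H_{i-1}$ ($0\le l\le q-1$), which translation by $s_i$ cyclically permutes, $D_l\mapsto D_{l+1}$ (indices mod $q$). Setting $B_l:=(B\cap D_l)-ls_i\seq H_{i-1}$ and $\beta_l:=|B_l|/|H_{i-1}|$, I split the boundary into an ``internal'' part carried by the generators $S_{i-1}$, which preserve each coset, and a ``cross'' part in the direction $s_i$. Since translation by $ls_i$ is an isomorphism of Cayley graphs, the internal part equals $\sum_l\prt_{S_{i-1}}(B_l)$, while the cross part equals $\sum_l|B_l\stm B_{l+1}|$; the inductive hypothesis bounds each $\prt_{S_{i-1}}(B_l)$ below by $\frac1m|H_{i-1}|F(\beta_l)$.

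It remains to combine these bounds, and this is where Corollary~\refc{FinFm} enters. Writing $\beta:=|B|/|H_i|=\frac1q\sum_l\beta_l$, the membership $F\in\cF_q$ gives $F(\beta)\le\frac1q\sum_l F(\beta_l)+(\beta_{\max}-\beta_{\min})$; substituting this into the target $\prt_{S_i}(B)\ge\frac1m|H_i|F(\beta)$ and cancelling the term $\frac1m|H_{i-1}|\sum_l F(\beta_l)$ against the inductive lower bound, the whole estimate reduces to the single inequality $\sum_l|B_l\stm B_{l+1}|\ge\frac qm|H_{i-1}|(\beta_{\max}-\beta_{\min})$. Here $|H_{i-1}|(\beta_{\max}-\beta_{\min})=\max_l|B_l|-\min_l|B_l|$, and a short cyclic ``total-variation'' argument — following $|B_0|\longc|B_{q-1}|$ around the cycle and using $|B_l\stm B_{l+1}|\ge|B_l|-|B_{l+1}|$ — yields $\sum_l|B_l\stm B_{l+1}|\ge\max_l|B_l|-\min_l|B_l|$; since $q\le m$, this is exactly what is needed. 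The main obstacle I anticipate is the geometric bookkeeping of this step: verifying cleanly that the boundary splits into internal and cross parts across a subgroup that need not be a direct factor, and confirming that the spread term produced by the $\cF_q$ inequality is absorbed by the cross-boundary — which is precisely where the hypotheses $q\le m$ and the generating property of $S$ are consumed.
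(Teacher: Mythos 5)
Your proposal is correct and takes essentially the same approach as the paper: you peel off one generator, decompose into the $q\le m$ cosets of the subgroup generated by the remaining ones, apply the inductive hypothesis inside each coset, bound the cross-edges from below by the cyclic total variation $\max_l|B_l|-\min_l|B_l|$, and absorb the spread term using $F\in\cF_q$ from Corollary~\refc{FinFm} together with $q/m\le 1$. Your induction along the chain $H_0\subsetneq\dotsb\subsetneq H_k=G$ is just the paper's induction on $|G|$ (with a minimal generating subset) unrolled into a filtration, and all the steps you flag as potential obstacles check out exactly as you describe.
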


Our proof of Theorem~\reft{isoper} is a variation of the argument used in
\cite[Theorem~5]{b:l} where a slightly weaker estimate is established under
the stronger assumption that \emph{all elements of $G$} have order at most
$m$.

In the special particular case where $G$ is a homocyclic group of exponent
$m$, and $S\seq G$ is a standard generating subset, Theorem~\reft{isoper}
gives a result of Bollob\'as and Leader \cite[Theorem~8]{b:bl}.

The estimate of Theorem~\reft{isoper}, in general, fails to hold for
non-abelian groups. For instance, if $G$ is the symmetric group of order
$|G|=6$, and $S\seq G$ consists of two involutions, then the Cayley graph
induced on $G$ by $S$ is a bi-directional cycle of length $6$. Consequently,
for a non-empty proper subset $A\seq G$ inducing a path in this cycle, one
has
  $$ \prt_S(A) = 2 < \frac12\,|G|\,F(|A|/|G|) $$
(as $F(n/6)=\sqrt{2/3}$ for $n\in\{1,5\}$ and $F(n/6)=1$ for
$n\in\{2,3,4\}$).

It would be interesting to investigate the sharpness of the estimate of
Theorem~\reft{isoper} and to determine whether the function $F$ in its
right-hand side can be replaced with a larger function.

Back to the class $\cF$, from Theorem \reft{max-F} we derive the following
result showing that, somewhat surprisingly, any function satisfying
\refe{cF-def} must actually satisfy a stronger inequality.
\begin{theorem}\label{t:strong}
For any function $f\in\cF$, and any $x_1,x_2,\lam\in[0,1]$, we have
  $$ f(\lam x_1+(1-\lam)x_2) \le \lam f(x_1)+(1-\lam)f(x_2) +
                                                      F(\lam)|x_2-x_1|. $$
\end{theorem}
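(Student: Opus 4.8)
The plan is to reduce the statement to the maximality of $F$ proved in Theorem~\reft{max-F}, by rescaling an arbitrary chord of $f$ to the standard interval $[0,1]$. Fix $f\in\cF$ and $x_1,x_2\in[0,1]$, and set $d:=|x_2-x_1|$. If $d=0$ the asserted inequality reads $f(x_1)\le f(x_1)$ and holds trivially, so assume $d>0$ and introduce the auxiliary function $g\colon[0,1]\to\mathbb R$ defined by
$$ g(\lam) := \frac{f(\lam x_1+(1-\lam)x_2)-\lam f(x_1)-(1-\lam)f(x_2)}{d}. $$
The theorem will then follow once we show that $g\in\cF_0$: indeed, Theorem~\reft{max-F} yields $g(\lam)\le F(\lam)$ for every $\lam\in[0,1]$, and multiplying through by $d$ gives exactly the claimed estimate.

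To see that $g\in\cF_0$, first observe the boundary values $g(0)=g(1)=0$, so that the boundary condition \refe{bound-cond} is satisfied. It remains to check that $g$ obeys the relaxed convexity condition \refe{cF-def}. The key point is that both the point map $\lam\mapsto\lam x_1+(1-\lam)x_2$ and the linear functional $\lam\mapsto\lam f(x_1)+(1-\lam)f(x_2)$ are affine in $\lam$; hence, for any $\mu,\nu,t\in[0,1]$, the argument $t\mu+(1-t)\nu$ is carried to the corresponding $t$-weighted convex combinations of values. Applying \refe{cF-def} for $f$ at the two points $\mu x_1+(1-\mu)x_2$ and $\nu x_1+(1-\nu)x_2$, whose mutual distance equals $|\mu-\nu|\,d$, and then subtracting the affine (hence exactly additive) linear part, the factors of $d$ cancel and leave precisely
$$ g(t\mu+(1-t)\nu)\le t\,g(\mu)+(1-t)\,g(\nu)+|\mu-\nu|, $$
which is \refe{cF-def} for $g$. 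Thus $g\in\cF$, and together with the boundary values, $g\in\cF_0$.

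With $g\in\cF_0$ established, the conclusion is immediate from Theorem~\reft{max-F} as indicated above. I do not expect a genuine obstacle at this stage: the entire force of the (perhaps surprising) strengthening is already contained in the maximality statement of Theorem~\reft{max-F}, and the present argument merely transports a general chord to the normalized interval. The only points requiring care are the verification that the normalization $g$ again lies in $\cF_0$ — that is, that the class $\cF$ is preserved under this affine rescaling, which is where the affinity of both the argument map and the subtracted linear term is used — and the separate treatment of the degenerate case $x_1=x_2$, as done above.
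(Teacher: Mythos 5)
Your proposal is correct and follows essentially the same route as the paper: the paper defines $\phi(\lam):=f(\lam x_1+(1-\lam)x_2)-\lam f(x_1)-(1-\lam)f(x_2)$, verifies via the same affinity computation that $(x_2-x_1)^{-1}\phi\in\cF_0$, and invokes Theorem~\reft{max-F} to get $\phi\le(x_2-x_1)F$. Your normalization by $d=|x_2-x_1|$ up front (with the separate treatment of $d=0$) is only a cosmetic difference.
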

Substituting $x_1=1$, $x_2=0$, and $f=F$ into the inequality of
Theorem~\reft{strong}, we conclude that the factor $F(\lam)$ in the
right-hand side is optimal, and the function $F$ cannot be replaced with a
larger function.

As shown in \cite[Theorem~6]{b:l}, for each $m\ge 2$, the functions
$F_m:=\sup\{f\colon f\in\cF_m\}$ are well-defined and belong themselves to
the classes $\cF_m$, and \cite[Theorem~5]{b:l} gives a lower bound for the
edge-boundary $\prt_S(A)$ in terms of these functions. In this context it is
natural to investigate the infimum $\inf\{F_m\colon m\ge 2\}$. It is somewhat
surprising that this infimum can be found explicitly, even though the
individual functions $F_m$ are known for $2\le m\le 4$ only
(see~\cite[Theorem~7]{b:l}).
\begin{theorem}\label{t:inf-Fm}
We have
  $$ \inf\{F_m\colon m\ge 2\}=F. $$
\end{theorem}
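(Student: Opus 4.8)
The plan is to prove the two pointwise inequalities $F\le\inf\{F_m\colon m\ge2\}$ and $\inf\{F_m\colon m\ge2\}\le F$ separately. The first is immediate: by Corollary~\refc{FinFm} we have $F\in\cF_m$ for every $m\ge2$, and since $F_m=\sup\{f\colon f\in\cF_m\}$ this gives $F\le F_m$ for all $m$, hence $F\le\inf_m F_m$. For the reverse inequality it suffices to fix $x\in(0,1/2]$ (the case $x\ge1/2$ following from the symmetry $x\mapsto 1-x$, under which each class $\cF_m$, and hence $F_m$, is invariant) and to show that $\inf_m F_m(x)\le k x^{1-1/k}$ for each integer $k\ge1$; minimizing over $k$ then yields $\inf_m F_m(x)\le F(x)$, because $F(x)=\min_k k\|x\|^{1-1/k}$ and $\|x\|=x$ here. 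Thus the whole problem reduces to producing, for each $k$ and each $\eps>0$, a single class $\cF_m$ in which every admissible $f$ satisfies $f(x)\le k x^{1-1/k}+\eps$.

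To bound $f(x)$ from above I would iterate \refe{m} along a geometric ``ladder'' of scales. The basic brick is the configuration consisting of $j$ copies of $0$ and $m-j$ copies of a point $a\in(0,1]$: here the minimum is $0$, the maximum is $a$, the mean equals $(1-j/m)a$, and \refe{m} together with $f(0)\le0$ gives $f((1-j/m)a)\le(1-j/m)f(a)+a$. Writing $r=(1-j/m)^{-1}>1$ and $b=(1-j/m)a$, this reads $f(b)\le r^{-1}f(rb)+rb$. I would then pick a ladder $x=a_0<a_1<\dots<a_N=1$ with $a_i=r_i a_{i-1}$, each ratio $r_i$ of the admissible form $(1-j_i/m)^{-1}$, and compose the resulting one-step bounds. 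Telescoping, and using $a_N=1$ with the boundary condition $f(1)\le0$ from \refe{bound-cond} to annihilate the leading term, yields the clean estimate $f(x)\le x\sum_{i=1}^N r_i$, valid for every $f\in\cF_m$ and hence for $F_m(x)$.

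It remains to minimize $\sum_{i=1}^N r_i$ over admissible ladders reaching $a_N=1$, that is, under the constraint $\prod_{i=1}^N r_i=1/x$. By the AM--GM inequality the sum is smallest for equal ratios $r_i=x^{-1/N}$, giving $x\sum r_i=N x^{1-1/N}$; choosing $N=k$ reproduces the bound $k x^{1-1/k}$, and the identity $\min_{N\ge1}N x^{1-1/N}=F(x)$ is precisely the definition of $F$. The delicate point, and what I expect to be the main obstacle, is \emph{realizability within one class}: the ideal ratios $x^{-1/k}$ must be replaced by genuine admissible ratios $(1-j_i/m)^{-1}$ lying in a \emph{single} $\cF_m$, while still meeting the product constraint $\prod(m-j_i)=x\,m^k$ exactly, so that the ladder lands on the boundary point $1$. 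For rational $x$ this amounts to factoring the integer $x\,m^k$ into $k$ nearly equal integer factors $m-j_i\approx m\,x^{1/k}$; choosing $m$ large and highly divisible, the deviation from equality can be made $o(m)$, so that $\sum r_i\to k x^{-1/k}$ and therefore $\inf_m F_m(x)\le k x^{1-1/k}$. Minimizing over $k$ gives $\inf_m F_m(x)\le F(x)$ for every rational $x$, and the estimate extends to all $x\in[0,1]$ by continuity of $F$ together with a density argument. Combined with the first inequality, this establishes $\inf\{F_m\colon m\ge2\}=F$.
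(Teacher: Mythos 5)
Your first inequality, $F\le\inf_m F_m$, is exactly the paper's: $F\in\cF_m$ for all $m$ by Corollary~\refc{FinFm}, hence $F\le F_m$. For the reverse inequality you take a genuinely different, constructive route, and its core is sound: the one-step estimate $f((1-j/m)a)\le(1-j/m)f(a)+a$ is a correct instance of \refe{m}, the telescoping along a ladder $x=a_0<a_1<\dots<a_N=1$ with $f(1)\le 0$ does yield $f(x)\le x\sum_{i=1}^N r_i$, and AM--GM correctly identifies $kx^{-1/k}$ as the ideal cost. But the two steps you defer are not equally benign. The realizability step for rational $x$ is fixable, though your phrasing points the wrong way: for a \emph{fixed} large $m$ one cannot in general factor the integer $xm^k$ into $k$ nearly equal factors at most $m$ (it may be essentially prime), and ``highly divisible $m$'' is asserted, not proved; the correct move is to reverse the quantifiers --- first choose rationals $\rho_1\longc\rho_{k-1}$ near $x^{1/k}$, set $\rho_k:=x/(\rho_1\dotsm\rho_{k-1})$, and \emph{then} take $m$ to be a common denominator of the $\rho_i$.

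The genuine gap is the extension to irrational $x$. Every admissible ratio $(1-j/m)^{-1}$ is rational, so a ladder started at an irrational $x$ can \emph{never} land exactly at $a_N=1$ (nor at any point where you control $f$); your construction therefore only bounds $F_m$ on a special set of rationals. The proposed patch --- ``continuity of $F$ together with a density argument'' --- does not close this: to pass the bound $\inf_m F_m(x')\le F(x')$ from rationals $x'$ to an irrational $x$ you would need \emph{lower} semicontinuity of $\inf_m F_m$, whereas an infimum (indeed, a decreasing limit) of continuous functions is only \emph{upper} semicontinuous, and no uniform-in-$m$ modulus of continuity for the $F_m$ is available. Rescuing your argument at irrational points seems to require real additional machinery (e.g., an a priori bound $f\le 1+o(1)$ on $\cF_m$ plus mixed configurations interpolating between $x$ and nearby good rationals). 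The paper sidesteps the whole issue with a soft argument: since $\cF_l\seq\cF_k$ whenever $k\mid l$, one has $F_0:=\inf_m F_m=\lim_l F_{l!}$, whence $F_0\in\cF_m$ for \emph{every} $m$; Lemma~\refl{ConvL3} then puts $F_0$ in $\cF_0$, and Theorem~\reft{max-F} gives $F_0\le F$ at every point at once, rational or not. Your approach, if completed, would buy explicit quantitative bounds on $F_m-F$ and would bypass Lemma~\refl{ConvL3}, but as written it proves the theorem only on a dense set of rationals.
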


As mentioned above, \cite[Lemma~3]{b:l} says that every concave function from
the class $\cF_0$ belongs to all the classes $\cF_m$. For the proof of
Theorem~\reft{inf-Fm} we need the converse assertion, which turns out to be
true even with the concavity assumption dropped.
\begin{lemma}\label{l:ConvL3}
If $f\in\cF_m$ for all $m\ge 2$, then $f\in\cF_0$.
\end{lemma}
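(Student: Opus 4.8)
The plan is to establish the only condition that is not already built into the hypothesis, namely the $(1,1)$-convexity \refe{cF-def}; the boundary condition \refe{bound-cond} is part of the definition of every class $\cF_m$ and so is inherited by $f$ automatically. Fix $x_1,x_2,\lam\in[0,1]$. Since \refe{cF-def} is invariant under the simultaneous swap $x_1\leftrightarrow x_2$, $\lam\leftrightarrow 1-\lam$, I may assume $x_1\le x_2$; the cases $x_1=x_2$ and $\lam\in\{0,1\}$ make \refe{cF-def} trivial, so I further assume $x_1<x_2$ and $0<\lam<1$ and set $y:=\lam x_1+(1-\lam)x_2\in(x_1,x_2)$.

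First I would treat rational $\lam$. Writing $\lam=p/m$ with integers $1\le p\le m-1$, I apply the defining estimate \refe{m} of the class $\cF_m$ to the $m$-tuple consisting of $p$ copies of $x_1$ and $m-p$ copies of $x_2$. Because $x_1<x_2$, the minimum of this tuple is $x_1$ and its maximum is $x_2$; its arithmetic mean is $\frac{px_1+(m-p)x_2}{m}=y$, and the slack term in \refe{m} equals $x_m-x_1=x_2-x_1=|x_2-x_1|$. Hence \refe{m} reads $f(y)\le\frac{p}{m}f(x_1)+\frac{m-p}{m}f(x_2)+|x_2-x_1|$, which is exactly \refe{cF-def} for this rational $\lam$. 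Note that this uses only the single class $\cF_m$ for an $m$ that is a denominator of $\lam$; the full hypothesis "$f\in\cF_m$ for all $m$" enters solely to make every rational $\lam\in(0,1)$ admissible.

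It remains to pass from rational to arbitrary real $\lam$, and this limiting step is the crux. Choosing rationals $p_n/m_n\to\lam$ with $1\le p_n\le m_n-1$ and putting $y_n:=\frac{p_n}{m_n}x_1+(1-\frac{p_n}{m_n})x_2\to y$, the rational case gives $f(y_n)\le\frac{p_n}{m_n}f(x_1)+(1-\frac{p_n}{m_n})f(x_2)+|x_2-x_1|$, whose right-hand side tends to $\lam f(x_1)+(1-\lam)f(x_2)+|x_2-x_1|$. To conclude I need $f(y)\le\liminf_n f(y_n)$, that is, lower semicontinuity of $f$ at the interior point $y$.

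The main obstacle is therefore the regularity of $f$, and it cannot be removed by algebra alone: expressing $y$ as a convex combination of two nearby points whose defining parameters are rational forces the combining weights to be irrational again, so the rational instances of \refe{cF-def} do not recombine into the irrational one. The gap must be bridged by (one-sided) continuity of $f$ at interior points — the regularity with which these function classes are studied. Granting it, the sequence $y_n\to y$ above yields $f(y)\le\liminf_n f(y_n)\le\lam f(x_1)+(1-\lam)f(x_2)+|x_2-x_1|$, which is precisely \refe{cF-def} for the given real $\lam$. Together with the inherited boundary condition this shows $f\in\cF_0$, as claimed.
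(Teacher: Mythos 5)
Your proof is correct and follows essentially the same route as the paper: the rational case via \refe{m} applied to $p$ copies of $x_1$ and $m-p$ copies of $x_2$ is exactly the paper's key step, and the interior-point regularity you ``grant'' is precisely \cite[Lemma~4]{b:l} (continuity of all $\cF_m$-functions on $(0,1)$), which the paper invokes at the same juncture. The only cosmetic difference is that the paper argues by contradiction, using continuity at the single point $\lam x_1+(1-\lam)x_2$ to replace an irrational $\lam$ in a putative counterexample by a nearby rational one, whereas you argue directly and need lower semicontinuity along $y_n\to y$ --- the same ingredient in equivalent form.
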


Clearly, every convex function lies in the class $\cF$. The last result of
our paper shows that all ``sufficiently flat'' \emph{concave} functions also
lie in $\cF$; this complements in a sense Theorem~\reft{max-F} which implies
that every concave function from the class $\cF$ is ``flat''.

Let $\cC$ be the class of all functions, defined and concave on the interval
$[0,1]$ and vanishing at the endpoints of this interval.
\begin{theorem}\label{t:flat}
If $f\in\cC$ and $f(x)\le4x(1-x)$ for all $x\in[0,1]$, then $f\in\cF_0$.
Moreover, the function $4x(1-x)$ is best possible here in the following
sense: if $h\in\cC$ has the property that for any function $f\in\cC$ with
$f\le h$ we have $f\in\cF_0$, then $h(x)\le 4x(1-x)$ for all $x\in[0,1]$.
\end{theorem}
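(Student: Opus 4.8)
The plan is to split the statement into its two asserted implications and to handle the (easier) forward direction by a direct estimate of the ``concavity defect''. Since any $f\in\cC$ is concave with $f(0)=f(1)=0$, it is automatically nonnegative on $[0,1]$ and satisfies the boundary condition \refe{bound-cond}; hence membership in $\cF_0$ reduces to verifying \refe{cF-def}. Writing $x_0:=\lam x_1+(1-\lam)x_2$ and assuming, as we may (the case $x_1=x_2$ being trivial), that $x_1<x_0<x_2$ and therefore $x_0\in(0,1)$, the required inequality is exactly
$$ D:=f(x_0)-\lam f(x_1)-(1-\lam)f(x_2)\le x_2-x_1. $$

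First I would introduce a supporting line $\ell$ of $f$ at $x_0$: by concavity there is an affine $\ell$ with $\ell\ge f$ on $[0,1]$ and $\ell(x_0)=f(x_0)$. Since $\ell$ is affine and $x_0$ is the corresponding convex combination of $x_1,x_2$, we have $\ell(x_0)=\lam\ell(x_1)+(1-\lam)\ell(x_2)$, so that $D=\lam(\ell(x_1)-f(x_1))+(1-\lam)(\ell(x_2)-f(x_2))$. I would then bound $f(x_1)$ and $f(x_2)$ from below by concavity and the vanishing of $f$ at the endpoints: comparison with the chords joining $(x_0,f(x_0))$ to $(0,0)$ and to $(1,0)$ gives $f(x_1)\ge (x_1/x_0)f(x_0)$ and $f(x_2)\ge ((1-x_2)/(1-x_0))f(x_0)$. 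Substituting these, the \emph{unknown slope of $\ell$ cancels}, and a short computation yields the clean bound
$$ D\le \frac{(x_0-x_1)(x_2-x_0)}{x_2-x_1}\cdot\frac{f(x_0)}{x_0(1-x_0)}. $$
Invoking the hypothesis $f(x_0)\le 4x_0(1-x_0)$ collapses the last factor to $4$, and the elementary inequality $4ab\le(a+b)^2$ with $a=x_0-x_1$ and $b=x_2-x_0$ finishes the forward direction.

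For the optimality statement I would argue by contraposition. Suppose $h\in\cC$ violates $h(x_0)\le 4x_0(1-x_0)$ at some $x_0\in(0,1)$. I would then take the ``tent'' $T\in\cC$ rising linearly from $(0,0)$ to $(x_0,h(x_0))$ and descending linearly to $(1,0)$; since $h$ lies above each of its chords, $T\le h$, while $T\in\cC$ by construction. It then suffices to check that $T\notin\cF_0$. Testing \refe{cF-def} at the symmetric pair $x_1,x_2=x_0\mp\eps$ with $\lam=1/2$ (for any $\eps\le\min\{x_0,1-x_0\}$, so that $x_1,x_2$ lie on the two linear pieces), the defect of $T$ at $x_0$ equals $D=\eps\,h(x_0)/(2x_0(1-x_0))$ against $|x_2-x_1|=2\eps$, whence $D/|x_2-x_1|=h(x_0)/(4x_0(1-x_0))>1$. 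Thus $T\in\cC$ and $T\le h$, yet $T\notin\cF_0$, contradicting the assumed property of $h$; hence $h(x)\le 4x(1-x)$ on $(0,1)$, and trivially at the endpoints.

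The main obstacle — the one point demanding foresight rather than routine computation — is the forward estimate, specifically engineering the cancellation of the supporting line's slope. A crude bound such as $D\le f(x_0)\le 1$ is useless for short intervals, so one must extract the factor $(x_0-x_1)(x_2-x_0)/(x_2-x_1)$, which is quadratically small in the interval length. The device that produces it is exactly the pair of endpoint-chord lower bounds on $f(x_1)$ and $f(x_2)$: their symmetric form is what forces the slope to drop out and leaves precisely the weight $1/(x_0(1-x_0))$ matching the extremal profile $4x(1-x)$. Once this is in place, both halves of \reft{flat} reduce to the same one-line inequality $4ab\le(a+b)^2$.
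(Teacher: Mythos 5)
Your proposal is correct and follows essentially the same route as the paper: both halves rest on the identical ingredients --- the two endpoint-chord concavity inequalities $f(x_1)\ge (x_1/x_0)f(x_0)$ and $f(x_2)\ge\bigl((1-x_2)/(1-x_0)\bigr)f(x_0)$ (which the paper writes as upper bounds on $f(x_0)$), the hypothesis $f(x_0)\le 4x_0(1-x_0)$, and the square inequality $4(x_0-x_1)(x_2-x_0)\le(x_2-x_1)^2$ --- and the optimality half uses the very same tent function, tested at $\lam=1/2$ (the paper at $x_1=0,\ x_2=2x_0$, you at $x_0\mp\eps$, which is equivalent). Your only departure is organizational: you combine the chord bounds directly with weights $\lam,1-\lam$ to obtain the clean defect bound, whereas the paper argues by contradiction from the minimum of three upper bounds on $f(x_0)$; and your supporting line is harmless but superfluous, since once the chord lower bounds are substituted into $D=f(x_0)-\lam f(x_1)-(1-\lam)f(x_2)$ the same cancellation occurs with no auxiliary line at all.
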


We now turn to the proofs of the results discussed above. We prove
Theorems~\reft{max-F} and~\reft{isoper} in Sections~\refs{proof-of-T1}
and~\refs{proof-of-T2}, respectively, and the proofs of
Theorems~\reft{strong},~\reft{inf-Fm}, and~\reft{flat} and
Lemma~\refl{ConvL3} are presented in Section~\refs{more-proofs}. Concluding
remarks are gathered in Section~\refs{remarks}.

\section{The proof of Theorem~\reft{max-F}}\label{s:proof-of-T1}

First, we show that for any function $f\in\cF_0$, and any real $x\in[0,1]$
and integer $k\ge 1$, one has $f(x)\le k\|x\|^{1-1/k}$; this will prove the
second assertion of the theorem. By symmetry, $x\le 1/2$ can be assumed
without loss of generality. Applying \refe{cF-def} with $x_1=\lam^{k-1}$ and
$x_2=0$, we get
  $$ f(\lam^k)\le \lam f(\lam^{k-1})+\lam^{k-1}; $$
that is,
  $$ \frac{f(\lam^k)}{\lam^k}
                     \le \frac{f(\lam^{k-1})}{\lam^{k-1}} + \frac1\lam. $$
Iterating, we obtain $f(\lam^k)/\lam^k\le k/\lam$ whence, substituting
$\lam=x^{1/k}$,
  $$ f(x) \le k \lam^{k-1} = kx^{1-1/k}, $$
as wanted.

We now prove that $F$ satisfies \refe{cF-def}, and hence $F\in\cF_0$. The
proof is based on the following lemma showing that if a concave and
continuous function satisfies \refe{cF-def} whenever $x_2\in\{0,1\}$, then it
actually satisfies \refe{cF-def} for all $x_1,x_2\in[0,1]$.
\begin{lemma}\label{l:redend}
Suppose that the function $f$ is concave and continuous on the interval
$[0,1]$. In order for \refe{cF-def} to hold for all $\lam,x_1,x_2\in[0,1]$,
it is sufficient that it holds for all $x_1,\lam\in[0,1]$ and
$x_2\in\{0,1\}$.
\end{lemma}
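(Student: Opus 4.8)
The plan is to reformulate the hypothesis geometrically and then argue by contradiction, locating an extremal violation of \refe{cF-def} and showing it must sit at an endpoint. Writing
\[
C_{[a,b]}(t):=\frac{(b-t)f(a)+(t-a)f(b)}{b-a}
\]
for the chord of $f$ over $[a,b]$, substituting $x_2=0$ and letting $\lam$ run shows that the hypothesis ``\refe{cF-def} holds for $x_2\in\{0,1\}$'' is exactly the pair of bounds
\[
f(t)\le C_{[0,\bet]}(t)+\bet\ \ (0\le t\le\bet),\qquad f(t)\le C_{[\alpha,1]}(t)+(1-\alpha)\ \ (\alpha\le t\le1);
\]
that is, over every subinterval anchored at an endpoint the graph of $f$ rises above its chord by at most the length of the interval. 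The assertion to be proved is the same bound for an \emph{arbitrary} subinterval $[a,b]$. Accordingly I would study
\[
\Phi(a,b,t):=f(t)-C_{[a,b]}(t)-(b-a),\qquad 0\le a\le t\le b\le1,
\]
which extends continuously (with value $0$) to the degenerate locus $a=b$, and suppose for contradiction that its maximum $V$ over the compact domain is positive; say $V=\Phi(a^*,b^*,t^*)$.

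First I would dispose of the boundary positions of the maximizer. If $a^*=b^*$ or $t^*\in\{a^*,b^*\}$ the chord term equals $f(t^*)$, so $\Phi=-(b^*-a^*)\le0$, contradicting $V>0$. If $a^*=0$ or $b^*=1$, then $[a^*,b^*]$ is anchored at an endpoint and the reformulated hypothesis gives $\Phi\le0$, again a contradiction. Hence the maximizer is interior: $0<a^*<t^*<b^*<1$.

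For the interior maximizer I would extract one-dimensional optimality conditions in each variable separately, using that the concave function $f$ has a nonempty subdifferential $\prt f(x)$ at every interior point and that each of its subgradients dominates the neighbouring chord slopes. Optimality in $t$ makes $t^*$ a maximum of the concave map $t\mapsto f(t)-C_{[a^*,b^*]}(t)$, so the chord slope $p:=\frac{f(b^*)-f(a^*)}{b^*-a^*}$ lies in $\prt f(t^*)$ and the gap is $s:=f(t^*)-C_{[a^*,b^*]}(t^*)=V+(b^*-a^*)>0$. Differentiating $\Phi$ in $a$ (with $b^*,t^*$ fixed) and in $b$ (with $a^*,t^*$ fixed), and setting $u:=t^*-a^*$, $v:=b^*-t^*$, $L:=u+v$, the interior optimality conditions produce subgradients $\xi\in\prt f(a^*)$, $\eta\in\prt f(b^*)$ with $\xi-p=L/v$ and $p-\eta=L/u$. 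On the other hand concavity forces $\xi\ge\frac{f(t^*)-f(a^*)}{u}=p+\frac su$ and $\eta\le\frac{f(b^*)-f(t^*)}{v}=p-\frac sv$, whence $L/v=\xi-p\ge s/u$ and $L/u=p-\eta\ge s/v$; that is, $s\le Lu/v$ and $s\le Lv/u$. Since $\min\{u/v,\,v/u\}\le1$, one of these yields $s\le L=b^*-a^*$, i.e.\ $V=s-L\le0$ — contradicting $V>0$ and finishing the proof.

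The main obstacle is precisely the interior case, and inside it the clean extraction of the optimality conditions in the moving endpoints $a$ and $b$: one must compute the derivative of the chord term $C_{[a,b]}(t^*)$ with respect to an endpoint and balance it against the linear term $-(b-a)$, and then marry the resulting slope identities to the concavity inequalities $\xi\ge p+s/u$, $\eta\le p-s/v$ that close the estimate. Because $f$ is only assumed continuous (hence differentiable merely off a countable set), I would phrase all of this through one-sided derivatives and subdifferentials; the concavity of $f$ is exactly what guarantees these exist and that subgradients dominate the adjacent chord slopes, which is the single inequality that makes the argument collapse to $s\le L$.
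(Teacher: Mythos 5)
Your proposal is correct, but it takes a genuinely different route from the paper's. The paper first reduces to \emph{strictly} concave $f$ via the perturbation $f_\eps(x)=(f(x)-\eps x^2)/(1+\eps)$, then, for a fixed chord slope $k$, studies $g=f-kx$ and parametrizes the pairs $(x_1,x_2)$ lying on a common level $\bet$ of $g$; since the two branches of the inverse of $g$ are respectively convex and concave (Claim~\refm{conv-inv}), the quantity $x_2-x_1+\bet$ is concave in $\bet$, so the needed bound $x_2-x_1+\bet\ge B$ only has to be checked at the extreme levels $\bet=B$ (trivial) and $\bet=\max\{g(0),g(1)\}$, where $x_1(1-x_2)=0$ and the endpoint hypothesis applies. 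You instead run a compactness-plus-first-order-conditions argument on $\Phi(a,b,t)$: boundary maximizers are disposed of by the hypothesis (note the parallel: in both arguments the endpoint hypothesis enters exactly when $a^*=0$ or $b^*=1$, respectively $x_1(1-x_2)=0$), and at an interior maximizer your slope identities are correct --- carrying out the chord-derivative computation gives $C'(a)=\frac{(b-t)\left[f'(a)(b-a)+f(a)-f(b)\right]}{(b-a)^2}$, whence $\xi-p=L/v$ and, symmetrically, $p-\eta=L/u$ --- and combined with the concavity bounds $\xi\ge p+s/u$, $\eta\le p-s/v$ they yield $s^2\le L^2$, i.e.\ $V\le0$, as you say. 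Your route avoids both the strict-concavity reduction and the inverse-function convexity claim, at the price of nonsmooth first-order analysis; the one step you leave implicit --- that interior maximality genuinely produces subgradients satisfying the \emph{equalities} rather than mere one-sided inequalities --- does close: maximality in $a$ gives $C'_+(a^*)\ge1\ge C'_-(a^*)$, while the positivity of the coefficient $\frac{b-t}{b-a}$ of $f'_\pm(a)$ together with $f'_-(a^*)\ge f'_+(a^*)$ gives $C'_-(a^*)\ge C'_+(a^*)$, pinching everything to equality (and in fact forcing $f$ to be differentiable at $a^*$, and likewise at $b^*$). So the proposal is a complete and somewhat more self-contained alternative, though the paper's level-set argument replaces your variational analysis by a single soft concavity-in-$\bet$ observation.
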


To avoid interrupting the flow of exposition, we proceed with the proof of
Theorem~\reft{max-F}, postponing the proof of Lemma~\refl{redend} to the end
of this section.

As Lemma~\refl{redend} shows, it suffices to establish \refe{cF-def} with
$f=F$ and $x_2\in\{0,1\}$. Indeed, the case $x_2=1$ reduces easily to that
where $x_2=0$ using the symmetry of $F$ around the point $1/2$. Thus, $x_2=0$
can be assumed, and in view of $F(0)=0$, renaming the remaining variable, we
have to prove that
\begin{equation}\label{e:toprove}
  F(\lam x) \le \lam F(x) + x,\  x,\lam\in[0,1].
\end{equation}
The situation where $x\in\{0,1\}$ is immediate, and we therefore assume that
$0<x<1$.

Addressing first the case $x\le 1/2$, we find $k\ge 1$ such that
$F(x)=kx^{1-1/k}$, and notice that $F(\lam x)\le(k+1)(\lam x)^{1-1/(k+1)}$ by
the definition of the function $F$; consequently, it suffices to prove that
  $$ (k+1)(\lam x)^{1-1/(k+1)} \le k\lam x^{1-1/k} + x. $$
Dividing through by $x$, substituting $t:=\lam^{1/(k+1)}x^{-1/(k(k+1))}$, and
rearranging the terms gives this inequality the shape
  $$ kt^{k+1} - (k+1)t^k + 1 \ge 0, $$
and to complete the proof it remains to notice that the left-hand side
factors as
  $$ (t-1)^2(kt^{k-1}+(k-1)t^{k-2}\longp 2t+1). $$

The case $1/2\le x<1$ reduces to that where $0<x\le 1/2$ as follows. Let
$x':=1-x$, so that $0<x'\le 1/2$. Assuming that \refe{toprove} fails to hold,
we get
  $$ F(\lam x) > x $$
and also
\begin{equation}\label{e:loc42}
  \lam F(x) < F(\lam x) - x \le x',
\end{equation}
which, in view of $F(x')=F(x)$, jointly yield
  $$ \frac{F(x')}{x'} < \frac1\lam < \frac{F(\lam x)}{\lam x}. $$
Since $F(z)/z$ is a decreasing function of $z$ (which is obvious for
$z\in[1/2,1]$, and follows directly from the definition of the function $F$
for $z\in[0,1/2]$), we conclude that $x'>\lam x$. Thus, $\lam x/x'<1$, and
recalling that $x'\le 1/2$, by what we have shown above,
  $$ F(\lam x) = F\left(\frac{\lam x}{x'}\cdot x'\right)
                                    \le \frac{\lam x}{x'}\,F(x') + x'. $$
Hence, from the assumption that \refe{toprove} is false,
  $$ \lam F(x) + x < \frac{\lam x}{x'}\,F(x') + x', $$
which can be written as
  $$ \frac{x-x'}{x'}\,\lam F(x) > x-x', $$
contradicting \refe{loc42}.

\medskip
It remains to prove Lemma~\refl{redend}. The proof uses the well-known fact
that a strictly concave function is unimodal; the specific version we need
here is that if $f$ is continuous and strictly concave on the closed interval
$[u,v]$, then either it is strictly monotonic on $[u,v]$, or there exists
$w\in(u,v)$ such that $f$ is strictly increasing on $[u,w]$ and strictly
decreasing on $[w,v]$. As a result, the minimum of a function, strictly
concave on a closed interval, is attained at one of the endpoints of the
interval. We also need
\begin{claim}\label{m:conv-inv}
Suppose that the function $f$ is defined and strictly concave on a closed
interval $[u,v]$. If $f$ is monotonically increasing on $[u,v]$, then its
inverse is strictly convex. If $f$ is monotonically decreasing on $[u,v]$,
then its inverse is strictly concave.
\end{claim}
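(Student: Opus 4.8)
The plan is to handle both assertions at once by exploiting the fact that monotonicity converts an inequality between \emph{values} of $f$ into an inequality between its \emph{arguments}. Since $f$ is strictly monotone on $[u,v]$, it is injective, so first I would record that its inverse $g:=f^{-1}$ is well defined on the image interval $J:=f([u,v])$ and is itself strictly monotone --- increasing when $f$ is increasing, decreasing when $f$ is decreasing. This is the only place where the strict-monotonicity hypothesis is genuinely used, and it guarantees that all the quantities introduced below make sense.

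Next I would fix distinct $y_1,y_2\in J$ and a weight $\mu\in(0,1)$, set $x_i:=g(y_i)$ so that $f(x_i)=y_i$, and abbreviate $y:=\mu y_1+(1-\mu)y_2$ together with $x':=\mu x_1+(1-\mu)x_2$. Strict concavity of $f$, applied to the distinct points $x_1,x_2$, then yields
$$ f(x') > \mu f(x_1)+(1-\mu)f(x_2) = y = f(g(y)). $$
Everything now hinges on comparing $x'$ with $g(y)$, and this is exactly where monotonicity does the work: from $f(x')>f(g(y))$ I read off $x'>g(y)$ when $f$ is increasing, and $x'<g(y)$ when $f$ is decreasing.

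Finally I would translate these two conclusions back into the language of the claim. In the increasing case the relation $x'>g(y)$ is precisely $g(\mu y_1+(1-\mu)y_2)<\mu g(y_1)+(1-\mu)g(y_2)$, the strict convexity of $g$; in the decreasing case $x'<g(y)$ is precisely $g(\mu y_1+(1-\mu)y_2)>\mu g(y_1)+(1-\mu)g(y_2)$, the strict concavity of $g$. I expect no real obstacle here: the argument is a direct three-line computation resting on a single application of strict concavity, and the only point requiring a moment's care is the bookkeeping of which direction of monotonicity of $f$ flips which inequality for $g$.
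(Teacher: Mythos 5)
Your proof is correct: the single application of strict concavity to get $f(\mu x_1+(1-\mu)x_2)>\mu y_1+(1-\mu)y_2=f(g(y))$, followed by monotonicity to compare arguments, is exactly the standard argument, and your bookkeeping of which direction of monotonicity yields convexity versus concavity of $g$ is right. The paper in fact states this claim without proof, treating it as well known, so there is nothing to diverge from; the only point worth a passing remark is that for $g(\mu y_1+(1-\mu)y_2)$ to be defined one needs the image $f([u,v])$ to be an interval, which is guaranteed when $f$ is continuous (as it is in the paper's application, where the claim is invoked for restrictions of a continuous concave function to intervals of monotonicity), and under the usual convention that convexity inequalities are only tested at points of the domain your argument covers the general case verbatim.
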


\begin{proof}[Proof of Lemma~\refl{redend}]
Suppose first that $f$ is \emph{strictly} concave on $[0,1]$, and assume that
\refe{cF-def} holds for all $\lam,x_1\in[0,1]$ and $x_2\in\{0,1\}$; hance, by
symmetry, also for all $\lam,x_2\in[0,1]$ and $x_1\in\{0,1\}$.

Given $x_1,x_2\in[0,1]$ with $x_1<x_2$, let $k:=(f(x_2)-f(x_1))/(x_2-x_1)$,
and consider the auxiliary function $g(x):=f(x)-kx$ (depending on $x_1$ and
$x_2$). Furthermore, set $\bet:=g(x_i)\ (i\in\{1,2\})$ and
$B:=\max\{g(x)\colon x\in[0,1]\}$. Notice, that $g(x_1)=g(x_2)$, in
conjunction with the strict concavity of $g$, implies $B>\max\{g(0),g(1)\}$.
For any $\lam\in[0,1]$ we have then
\begin{multline*}
  f(\lam x_1+(1-\lam)x_2) - \lam f(x_1) - (1-\lam)f(x_2) \\
      = g(\lam x_1+(1-\lam)x_2) - \lam g(x_1) - (1-\lam )g(x_2) \le B - \bet,
\end{multline*}
and therefore it suffices to show that
\begin{equation}\label{e:x2x1betB}
  x_2 - x_1 + \bet \ge B
\end{equation}
for any $0<x_1<x_2<1$ and with $\bet=\bet(x_1,x_2)$ and $B=B(x_1,x_2)$
defined as above.

We now put the reasoning onto its head. Suppose that a real $k$ is fixed so
that, if $g(x)=f(x)-kx$ and $B=\max\{g(x)\colon x\in[0,1]\}$, then
$\max\{g(0),g(1)\}<B$. Let $w\in(0,1)$ be defined by $g(w)=B$. By the
intermediate value property and monotonicity of $g$ on each of the intervals
$[0,w]$ and $[w,1]$, to any given $\bet$ with $\max\{g(0),g(1)\}\le\bet\le B$
there corresponds then a unique pair $(x_1,x_2)$ with $g(x_1)=g(x_2)=\bet$
and $0\le x_1\le w\le x_2\le 1$. As the above argument shows, to complete the
proof (under the \emph{strict} concavity assumption) it suffices to establish
\refe{x2x1betB} with $x_1$ and $x_2$ understood as functions of the variable
$\bet$ ranging from $\max\{g(0),g(1)\}$ to $B$. Since $x_1$ and $x_2$ are
actually inverses of the function $g$ restricted to the appropriate
intervals, by Claim~\refm{conv-inv}, $x_1$ is convex and continuous, and
$x_2$ is concave and continuous, so that $x_2-x_1+\bet$ is concave and
continuous and consequently, \refe{x2x1betB} will follow once we obtain it
for $\bet=\max\{g(0),g(1)\}$ and also for $\bet=B$. The latter case (with
equality sign) is immediate from $x_2\ge x_1$. For the former case, we notice
that if $\bet=\max\{g(0),g(1)\}$, then $x_1(1-x_2)=0$, whence, having
$t\in[0,1]$ defined by $w=tx_1+(1-t)x_2$, we get
\begin{align*}
  x_2 - x_1 + \bet
    &= x_2 - x_1 + B
             - \big(g(tx_1+(1-t)x_2) - tg(x_1) - (1-t)g(x_2)\big) \\
    &= x_2 - x_1 + B
             - \big(f(tx_1+(1-t)x_2) - tf(x_1) - (1-t)f(x_2)\big) \\
    &\ge B
\end{align*}
by the assumption of the lemma (and the remark at the very beginning of the
proof).

Finally, suppose that $f$ is concave but, perhaps, not \emph{strictly}
concave on $[0,1]$. For $\eps\in(0,1)$ let $f_\eps(x):=(f(x)-\eps
x^2)/(1+\eps)$ and define
\begin{align*}
  \Del(x_1,x_2,\lam)
           &:= f(\lam x_1+(1-\lam)x_2)
                             - \lam f(x_1) - (1-\lam) f(x_2) - |x_2-x_1| \\
\intertext{and}
  \Del_\eps(x_1,x_2,\lam)
           &:= f_\eps(\lam x_1+(1-\lam)x_2)
                     - \lam f_\eps(x_1) - (1-\lam) f_\eps(x_2) - |x_2-x_1|.
\end{align*}
A straightforward computations confirms that
\begin{equation}\label{e:Dellam}
  \Del(x_1,x_2,\lam) = (1+\eps)\Del_\eps(x_1,x_2,\lam) + \eps
                           |x_2-x_1| \big( 1-\lam(1-\lam)|x_2-x_1| \big).
\end{equation}
Consequently, if $\Del(x_1,x_2,\lam)\le 0$ when $x_1,\lam\in[0,1]$ and
$x_2\in\{0,1\}$, then also $\Del_\eps(x_1,x_2,\lam)\le 0$ under the same
assumptions. Since $f_\eps$ is \emph{strictly} concave (as it is easy to
verify), we conclude that $\Del_\eps(x_1,x_2,\lam)\le 0$ actually holds for
all $x_1,x_2,\lam\in[0,1]$. Now \refe{Dellam} shows that
$\Del(x_1,x_2,\lam)\le\eps$ for all $x_1,x_2,\lam\in[0,1]$, and since $\eps$
can be chosen arbitrarily small, we have indeed $\Del(x_1,x_2,\lam)\le 0$.
\end{proof}

\section{The proof of Theorem~\reft{isoper}}\label{s:proof-of-T2}

We use induction on $|G|$. Without loss of generality, we assume that $S$ is
a minimal (under inclusion) generating subset. Fix an element $s_0\in S$ and
write $S_0:=S\stm\{s_0\}$. If $S_0=\est$, then $G$ is cyclic with $|G|$ being
equal to the order of $s_0$, whence $|G|\le m$ and the assertion follows from
$F\le 1$. Assuming now that $S_0\ne\est$, let $H$ be the subgroup of $G$,
generated by $S_0$; thus, $H$ is proper and non-trivial. Since the quotient
group $G/H$ is cyclic, generated by $s_0+H$, its order $l:=|G/H|$ does not
exceed $m$. For $i=1\longc l$ set $A_i:=A\cap(is_0+H)$ and $x_i:=|A_i|/|H|$.

Fix $i\in[1,l]$. By the induction hypothesis (as applied to the subset
$(A-is_0)\cap H$ of the group $H$ with the generating subset $S_0$), the
number of edges from an element of $A_i$ to an element of $(is_0+H)\stm A$ is
at least $\frac1m\,|H|F(x_i)$. Furthermore, the number of edges from $A_i$ to
$((i+1)s_0+H)\stm A$ is at least
  $$ \max \{ |A_i|-|A_{i+1}| , 0 \} \\
    = |H| \max\{ x_i-x_{i+1}, 0 \}
                = \frac12\,|H|\big( |x_i-x_{i+1}| + x_i-x_{i+1} \big) $$
(where $x_{i+1}$ is to be replaced with $x_1$ for $i=l$). It follows that
\begin{multline*}
  \prt_S(A) \ge \frac1m\,|H|\,\big(F(x_1)\longp F(x_l)\big) \\
            + \frac12\, |H|\big(|x_1-x_2|\longp |x_{l-1}-x_l|+|x_l-x_1|\big).
\end{multline*}
Choose $i,j\in[1,l]$ so that $x_i$ is the smallest, and $x_j$ the largest of
the numbers $x_1\longc x_l$. From the triangle inequality,
  $$ |x_1-x_2|\longp |x_{l-1}-x_l|+|x_l-x_1| \ge 2(x_j-x_i), $$
whence
\begin{align*}
  \prt_S(A)
      &\ge \frac1m\,|G|\,\frac{F(x_1)\longp F(x_l)}l  + |H|(x_j-x_i) \\
      &\ge \frac1m\,|G|\,\( \frac{F(x_1)\longp F(x_l)}l + (x_j-x_i) \). \\
\intertext{Recalling that $F\in\cF_l$ by Corollary~\refc{FinFm}, we conclude
           that}
  \prt_S(A)
      &\ge \frac1m\,|G|\,F\( \frac{x_1\longp x_l}l \)  \\
      &= \frac1m\,|G|\,F(|A|/|G|),
\end{align*}
as wanted.

\section{The proofs of Lemma~\refl{ConvL3} and
  Theorems~\reft{strong},~\reft{inf-Fm}, and~\reft{flat}}\label{s:more-proofs}

\begin{proof}[Proof of Theorem~\reft{strong}]
Fix $x_1,x_2\in[0,1]$ with $x_1<x_2$ and consider the function $\phi$ defined
by
  $$ \phi(\lam ) := f(\lam x_1+(1-\lam )x_2)-\lam f(x_1)-(1-\lam )f(x_2),
                                                          \ \lam \in[0,1]. $$
Clearly, we have $\phi(0)=\phi(1)=0$, and for any $u,v,t\in[0,1]$ with $u<v$,
using the fact that $f\in\cF$, we get
\begin{align*}
  \phi\big(tu +&(1-t)v\big) \\
    &= f\big((tu+(1-t)v)\,(x_1-x_2)+x_2\big)
                            - (tu+(1-t)v)\,(f(x_1)-f(x_2)) - f(x_2) \\
    &= f\big( t(u(x_1-x_2)+x_2) + (1-t)(v(x_1-x_2)+x_2) \big) \\
    &\phantom{f\big( t(u(x_1-x_2)+\;}
             - t(uf(x_1)+(1-u)f(x_2)) - (1-t)(vf(x_1)+(1-v)f(x_2)) \\
    &\le tf(u(x_1-x_2)+x_2) + (1-t)f(v(x_1-x_2)+x_2) + (v-u)(x_2-x_1) \\
    &\phantom{f\big( t(u(x_1-x_2)+\;}
             - t(uf(x_1)+(1-u)f(x_2)) - (1-t)(vf(x_1)+(1-v)f(x_2)) \\
    &= t\phi(u)+(1-t)\phi(v)  + (v-u)(x_2-x_1).
\end{align*}
Hence, $(x_2-x_1)^{-1}\phi\in\cF_0$, and therefore $\phi\le(x_2-x_1)F$ by
Theorem~\reft{max-F}; that is,
  $$ f(\lam x_1+(1-\lam)x_2)-\lam f(x_1)-(1-\lam)f(x_2)
                                                  \le F(\lam) (x_2-x_1) $$
for all $x_1,x_2,\lam\in[0,1]$ with $x_1<x_2$.
\end{proof}

\begin{proof}[Proof of Lemma~\refl{ConvL3}]
Aiming at a contradiction, suppose that $f\in\cap_{m\ge 2}\cF_m$, but
$f\notin\cF_0$. By the latter assumption, there exist $\lam,x_1,x_2\in[0,1]$
with $x_1\le x_2$ such that
\begin{equation}\label{e:aim-contr}
  f(\lam x_1+(1-\lam)x_2) > \lam f(x_1)+(1-\lam)f(x_2) + (x_2-x_1).
\end{equation}
Clearly, we have $\lam\in(0,1)$. This implies
 $x:=\lam x_1+(1-\lam)x_2\in(0,1)$, whence $f$ is continuous at $x$ by
\cite[Lemma~4]{b:l} (which says that all functions from the classes $\cF_m$
are continuous on $(0,1)$). It follows that there is a \emph{rational}
$\lam\in(0,1)$ for which \refe{aim-contr} holds true; say, $\lam=u/m$ with
integer $0<u<m$. Now \refe{aim-contr} can be written as
  $$ f\(\frac{ux_1+(m-u)x_2}m\) > \frac{uf(x_1)+(m-u)f(x_2)}m + x_2-x_1, $$
contradicting the assumption $f\in\cF_m$.
\end{proof}

\begin{proof}[Proof of Theorem~\reft{inf-Fm}]
Let $F_0:=\inf\{F_m\colon m\ge 2\}$; our goal, therefore, is to show that
$F_0=F$. To begin with, we prove that
\begin{equation}\label{e:F0incFm}
  F_0\in\cF_m,\ m\ge 2.
\end{equation}
To this end, we fix $\eps>0$ and $x_1\longc x_m\in[0,1]$ with
 $\min_i x_i=x_1$ and $\max_i x_i=x_m$, and show that
\begin{equation}\label{e:tmp3}
  F_0\(\frac{x_1\longp x_m}m\) \le \frac{F_0(x_1)\longp F_0(x_m)}m
                                                        + (x_m-x_1) + \eps.
\end{equation}
As shown in \refb{l}, if $k$ and $l$ are integers with $k\mid l$, then
$\cF_l\seq\cF_k$, and hence $F_l\le F_k$. It follows that
  $$ F_0(x) = \lim_{l\to\infty} F_{l!}(x), \quad x\in[0,1]; $$
thus, there is an integer $l\ge m$ such that $F_{l!}(x_i)\le F_0(x_i)+\eps$
for each $i\in[1,m]$. Since $F_{l!}\in\cF_{l!}\seq\cF_m$ in view of
 $m\mid l!$, we then get
\begin{align*}
   F_0\(\frac{x_1\longp x_m}m\)
    &\le F_{l!}\(\frac{x_1\longp x_m}m\) \\
    &\le \frac{F_{l!}(x_1)\longp F_{l!}(x_m)}m + (x_m-x_1) \\
    &\le \frac{F_0(x_1)\longp F_0(x_m)}m + (x_m-x_1) + \eps,
\end{align*}
establishing \refe{tmp3}, and therefore \refe{F0incFm}.

To complete the proof we notice that \refe{F0incFm} and Lemma~\refl{ConvL3}
yield $F_0\in\cF_0$, whence, by Theorem~\reft{max-F},
\begin{equation}\label{e:F0<F}
  F_0\le F.
\end{equation}
On the other hand, since $F\in\cF$ is concave, by \cite[Lemma~3]{b:l} we have
$F\in\cF_m$ for every $m\ge 2$. It follows that $F\le F_m$ for every $m\ge
2$, implying
\begin{equation}\label{e:F<F0}
  F\le F_0.
\end{equation}
Comparing \refe{F0<F} and \refe{F<F0}, we get the assertion.
\end{proof}

\begin{proof}[Proof of Theorem~\reft{flat}]
For the first assertion of the theorem we show that \refe{cF-def} holds true,
provided that $f\in\cC$ and $f(x)\le4x(1-x),\ x\in[0,1]$. Using symmetry and
disposing of the trivial cases, we assume that $x_1<x_2$ and $0<\lam<1$.
Furthermore, letting $x_0:=\lam x_1+(1-\lam)x_2$, we rewrite the inequality
to prove as
\begin{equation}\label{e:to-prove}
  f(x_0) \le \frac{x_2-x_0}{x_2-x_1}\,f(x_1)
                                  + \frac{x_0-x_1}{x_2-x_1}\,f(x_2) + x_2-x_1.
\end{equation}

By concavity, we have
  $$ f(x_0) \le \frac{x_0}{x_1}\, f(x_1) $$
(as the point $(x_1,f(x_1))$ lies above the segment joining the points
$(0,0)$ and $(x_0,f(x_0))$, and
  $$ f(x_0) \le \frac{1-x_0}{1-x_2}\,f(x_2) $$
(as $(x_2,f(x_2))$ is above the segment joining $(x_0,f(x_0))$ and $(1,0)$).
Also, by the assumptions,
  $$ f(x_0) \le 4x_0(1-x_0). $$
Comparing with \refe{to-prove}, we see that it suffices to prove that
\begin{multline*}
  \min \left\{ \frac{x_0}{x_1}\, f(x_1),
          \ \frac{1-x_0}{1-x_2}\,f(x_2),\ 4x_0(1-x_0) \right\} \\
                   \le \frac{x_2-x_0}{x_2-x_1}\,f(x_1)
                             + \frac{x_0-x_1}{x_2-x_1}\,f(x_2) + x_2-x_1.
\end{multline*}
Assuming for a contradiction that this is wrong, after tedious, but routine
algebraic manipulations we then derive
\begin{align*}
  x_2f(x_1) - x_1f(x_2) &> x_1 \frac{(x_2-x_1)^2}{x_0-x_1}, \\
  (x_2-1)f(x_1) + (1-x_1)f(x_2) &> (1-x_2)\frac{(x_2-x_1)^2}{x_2-x_0}, \\
\intertext{and}
  (x_0-x_2)f(x_1) + (x_1-x_0)f(x_2) &> (x_2-x_1)^2 - 4x_0(1-x_0)(x_2-x_1).
\end{align*}
Multiplying the first of these inequalities by $1-x_0$ and the second by
$x_0$, and adding up the resulting estimates and the third inequality, we get
  $$ 4x_0(1-x_0)(x_2-x_1)
          > \left( \frac{x_1(1-x_0)}{x_0-x_1}
                   + \frac{x_0(1-x_2)}{x_2-x_0} + 1 \right) (x_2-x_1)^2. $$
It is easily verified that this simplifies to
  $$ \frac{(x_2-x_1)^2}{(x_0-x_1)(x_2-x_0)} < 4 $$
and further to
  $$ 4x_0^2 - 4x_0(x_1+x_2) + (x_1+x_2)^2 < 0, $$
which cannot hold since the left-hand side is a square.

To prove the second assertion, suppose that $h$ is a concave function on
$[0,1]$ with $h(0)=h(1)=0$ and $h(x_0)>4x_0(1-x_0)$ for some $x_0\in(0,1)$,
and let
  $$ f(x) := \begin{cases}
               \frac{h(x_0)}{x_0}\,x\ &\text{if}\ 0\le x\le x_0, \\
               \frac{h(x_0)}{1-x_0}\,(1-x)\ &\text{if}\ x_0\le x\le 1;
             \end{cases} $$
thus, $f$ is a concave function on $[0,1]$ with $f(0)=f(1)=0$, and $f\le h$
by concavity of $h$. We show that, on the other hand, $f\notin\cF_0$, and
indeed, assuming for definiteness $x_0\le 1/2$, that
  $$ f(x_0) > \frac12\,f(0) + \frac12\,f(2x_0) + 2x_0. $$
To this end we just plug in the definition of $f$ and rewrite this inequality
as
  $$ 2x_0 < h(x_0) - \frac12\,\frac{h(x_0)}{1-x_0}\,(1-2x_0)
                                                 = \frac{h(x_0)}{2(1-x_0)}, $$
which is equivalent to the assumption $h(x_0)>4x_0(1-x_0)$.
\end{proof}

\section{Concluding remarks}\label{s:remarks}


It seems natural to extend the class $\cF$ by considering, for every finite
closed interval $I$ and every constant $c>0$, the class $\cF(I,c)$ of those
real-valued functions $f$ defined on $I$ and satisfying
  $$ f(\lam x_1+(1-\lam)x_2)\le \lam f(x_1)+(1-\lam)f(x_2) + c|x_2-x_1| $$
for all $x_1,x_2\in I$ and $\lam\in[0,1]$. This, however, does not lead to
any principally new results, as one has $f\in\cF(I,c)$ if and only if
$(c|I|)^{-1}f\circ\phi\in\cF$, where $\phi$ is a linear bijection of $[0,1]$
onto $I$; that is, $\cF(I,c)$ is obtained from $\cF$ by a simple linear
scaling. In particular, $c|I|F\circ\phi^{-1}$ is the largest function of the
subclass of all functions from $\cF(I,c)$, non-positive at the endpoints of
$I$. Also, as a corollary of Theorem~\reft{strong}, if $f\in\cF(I,c)$, then
  $$ f(\lam x_1+(1-\lam)x_2) \le \lam f(x_1)+(1-\lam)f(x_2) +
                                                      cF(\lam)|x_2-x_1| $$
for all $x_1,x_2\in I$ and $\lam\in[0,1]$.

In contrast, it might be interesting to extend the results of this paper, and
in particular Theorem~\reft{max-F}, onto the class of all $(c,p)$-convex
functions on a given closed interval, for every fixed $p>0$. Normalization
reduces this to studying the class $\cF_0^{(p)}$ of all real-valued functions
on $[0,1]$, satisfying the boundary condition \refe{bound-cond} and the
inequality
  $$ f(\lam x_1+(1-\lam)x_2) \le \lam f(x_1)+(1-\lam)f(x_2)+|x_2-x_1|^p, $$
for all $x_1,x_2,\lam\in[0,1]$. It is not difficult to see that the function
$F^{(p)}:=\sup\{f\colon f\in\cF^{(p)}\}$ is well defined and lies itself in
the class $\cF^{(p)}$, and that for any $f\in\cF^{(p)}$ we have
  $$ f(\lam x_1+(1-\lam)x_2)
               \le \lam f(x_1)+(1-\lam)f(x_2) + F^{(p)}(\lam)|x_2-x_1|^p, $$
for all $x_1,x_2,\lam\in[0,1]$; moreover, $F^{(p)}$ is the largest function
with this property. In view of Theorem~\reft{max-F}, one can expect that,
perhaps, an explicit expression for the functions $F^{(p)}$ can be found.

\section*{Acknowledgement}
The author is grateful to Eugen Ionescu for his interest and useful
discussions.

\vfill

\bigskip

\end{document}